\newcommand{\BC}{{\mathbb {C}}} 
\newcommand{\BG}{{\mathbb {G}}}
 \newcommand{\BN}{{\mathbb {N}}}
 \newcommand{\BZ}{{\mathbb {Z}}}
\newcommand{\CC}{{\mathcal {C}}}
\newcommand{\CG}{{\mathcal {G}}} \newcommand{\CH}{{\mathcal {H}}}
\newcommand{\CM}{{\mathcal {M}}} 
\newcommand{\CO}{{\mathcal {O}}}
 \newcommand{\fp}{{\mathfrak{p}}}
\newcommand{\coker}{{\mathrm{coker}}}
 \newcommand{\EP}{{\mathrm{EP}}}
\newcommand{\Gal}{{\mathrm{Gal}}} \newcommand{\GL}{{\mathrm{GL}}}
\newcommand{\Hom}{{\mathrm{Hom}}}
\newcommand{\Rep}{{\mathrm{Rep}}}\newcommand{\RHom}{{\mathrm{RHom}}}
\newcommand{\Res}{{\mathrm{Res}}}
\newcommand{\Spec}{{\mathrm{Spec\hspace{2pt}}}}
\newcommand{\Ext}{{\mathrm{Ext}}}
\newcommand{\wh}[1]{{\widehat {#1}}}
\newcommand{\sk}{\medskip}
\newcommand{\bs}{\backslash}
\newcommand{\s}{\sk\noindent}
\newcommand{\dfn}[1]{\textit{#1}}
\def\varW@#1#2{%
\vtop{\m@th\ialign{##\cr
\hfil$#1 \mathrm{colim} $\hfil\cr
\noalign{\nointerlineskip\kern1.5\ex@}#2\cr
\noalign{\nointerlineskip\kern-\ex@}\cr}}
}
\def\colim{%
\mathop{\mathpalette\varW@{}}\nmlimits@
}\makeatother
\theoremstyle{plain}
\newtheorem{thm}{Theorem}[section] \newtheorem{cor}[thm]{Corollary}
\newtheorem{lem}[thm]{Lemma}  \newtheorem{prop}[thm]{Proposition}
\newtheorem {conj}[thm]{Conjecture}
\newtheorem*{utf}{Unramified twisting family}
\theoremstyle{remark} \newtheorem{remark}[thm]{Remark}
\theoremstyle{definition} 
\theoremstyle{definition}  
\newtheorem{defn+lem}[thm]{Definition and Lemma}
\numberwithin{equation}{section}
\newcommand{\Mod}{\mathrm{Mod}}
\newcommand*{\sheafhom}{\mathrm{H}\kern -.5pt om}
\begin{document}
\title{Multiplicities of Representations in Algebraic Families}

\author{Li Cai}
\address{Academy for Multidisciplinary Studies\\
Beijing National Center for Applied Mathematics\\
Capital Normal University\\
Beijing, 100048, People's Republic of China}
\email{caili@cnu.edu.cn}

\author{Yangyu Fan}
\address{Academy for Multidisciplinary Studies\\
Beijing National Center for Applied Mathematics\\
Capital Normal University\\
Beijing, 100048, People's Republic of China}
\email{b452@cnu.edu.cn}

\subjclass[1991]{22E45, 20G25.}


\keywords{Branching laws, Homological multiplicities, Spherical varieties}

\maketitle

\begin{abstract}
    In this short notes, we consider multiplicities of representations in general algebraic
    families, especially the upper semi-continuity  of homological multiplicities
    and the locally constancy of Euler-Poincare numbers. This generalizes the main result
    of Aizenbud-Sayag for unramified twisting families.  
\end{abstract}

\tableofcontents

\section{Introduction}
Let $G$ be a reductive group over a $p$-adic field $F$ and $H\subset G$ be a  closed \dfn{spherical} reductive subgroup, i.e. $H$ admits an open orbits  the flag variety of $G$. Let $\Rep(G,\BC)$ be the category of complex smooth $G(F)$-representations. In the \dfn{relative Langlands program} (see \cite{SV} etc), it is central to study  the \dfn{multiplicity} $m(\sigma):=\dim\Hom_{H(F)}(\sigma,\BC)$  for smooth admissible $\sigma\in\Rep(G,\BC)$.

As suggested in \cite{Pra18}, to study $m(\sigma)$,  it is more convenient to consider the homological multiplicities   $m^i(\sigma):=\dim\Ext^i_{H(F)}(\sigma,\BC)$ and the Euler-Poincare number $\EP(\sigma):=\sum_{i\geq0}(-1)^im^i(\sigma)$ simultaneously. Usually,  the Euler Poincare number $\EP(\sigma)$ is easier to control and in many  circumstances, one may expect to deduce results on $m(\sigma)$ from those of $\EP(\sigma)$.
For example, it is conjectured in \cite{Pra18} (see also  \cite[Conjectures 6.4,6.5]{Wan19}) that when the pair   $(G,H)$ is \dfn{strongly tempered}, i.e. the matrix coefficients of  tempered $G(F)$-representations are absolutely integrable on $H(F)$, then   $m(\sigma)=\EP(\sigma)$ for any irreducible tempered  $\sigma\in \Rep(G,\BC)$. Actually the stronger result $m^i(\sigma)=0$ for  $i>0$ is known  for the $(\GL_{n+1}\times \GL_n, \GL_n)$
-case (see \cite{CS21}), Bessel model for classical groups (see \cite{Che21}),  the triple product case (see \cite{CF21b}) and when   $H(F)$ is compact   (see \cite[Thm 2.14]{AAG12} etc) or  $\sigma$ is supercuspidal (see \cite[Remark 6.6]{Wan19}).

In this paper, we shall consider   variations of $m^i(\sigma)$ and $\EP(\sigma)$ in families.  Throughout  this paper, we assume the following working hypothesis:
\begin{center}
    the multiplicity $m(\sigma)$ is \dfn{finite} for all irreducible $\sigma\in\Rep(G,\BC)$.
\end{center}
This implies  that  $m^i(\sigma)$ and $\EP(\sigma)$ are all well-defined and finite for arbitrary finite length $\sigma\in \Rep(G,\BC)$ (see the discussion at the beginning of Section \ref{Homo mul}).
Note that this hypothesis is already known in many cases \cite[Theorem 5.1.5]{SV} and conjectured to hold for all spherical pairs.

To explain the flavor of the main result, let us start with the  \dfn{unramified twisting family}.
\begin{utf}Let $P\subset G$  be a parabolic subgroup with  Levi factor $M$ and take $\sigma\in \Rep(M,\BC)$ of  finite length. Attached to the data $(P,M,\sigma)$, one has the unramfied twisting family   $\left\{I_P^G(\sigma\chi)\Big|\chi\in \wh{M}\right\}$ where  $\wh{M}$ is  the complex torus parameterizing unramified characters of $M(F)$. Then as functions on the complex torus $\wh{M}$,
\begin{itemize}
    \item 
    $m(I_P^G(\sigma\chi))$ is \dfn{upper semi-continuous}, 
    i.e. for each $n\in\BN$, the set $\left\{\chi \in \wh{M} \Big| m(I_P^G\sigma\chi)\leq n\right\}$ is open
    (see \cite[Appendix D]{FLO12});
    \item  $\EP(I_P^G(\sigma\chi))$ is constant (see \cite[Theorem E]{AS20}).
\end{itemize}
\end{utf} 
For arithmetic applications such as $p$-adic special value formulae on eigenvarieties (see \cite{Dis19} etc), we are motivated   to consider  the following setting (following \cite{EH14,Dis20}): Fix a subfield $E\subset\BC$ and  let $R$ be a finitely generated reduced $E$-algebra. Let $\pi$ be a finitely generated smooth admissible torsion-free $R[G(F)]$-module, namely a finitely generated $R[G(F)]$-module  $\pi$ such that 
\begin{itemize}
	\item  any $v \in \pi$ is fixed by an open compact subgroup of $G(F)$;
	\item  the submodule
	$\pi^K\subset \pi$ of $K$-fixed elements is  finitely generated  over $R$ for any compact open subgroup $K \subset G(F)$;
	\item  $\pi$ is torsion-free as a $R$-module.
\end{itemize}
For any point $x\in \Spec(R)$, let $k(x)$ be the residue field and denote the category of smooth $G(F)$-representations over $k(x)$ by $\Rep(G,k(x))$.
For $\pi|_x:=\pi\otimes_{R}k(x)\in \Rep(G,k(x))$, set  $$m^i(\pi|_x):=\dim_{k(x)} \Ext^i_{H(F)}\left(\pi|_x,k(x)\right),\quad \EP(\pi|_x):=\sum_{i\geq0}(-1)^im^i(\pi|_x)$$
where the Ext-groups are computed in $\Rep(G,k(x))$. Note that by Proposition \ref{EPE} below, all the numbers $m^i(\pi|_x)$ and  $\EP(\pi|_x)$ are well-defined and finite under our running hypothesis. 

Supported by the results for unramified twisting families, we  propose the following conjecture:
\begin{conj}\label{Homo multi conj}With respect to the Zariski topology on $\Spec(R)$,  $m^i(\pi|_x)$	is upper semi-continuous  for each $i\in\BN$ and $\EP(\pi|_x)$ is locally constant.
	\end{conj}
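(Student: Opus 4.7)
The plan is to realize the functions $x\mapsto m^i(\pi|_x)$ and $x\mapsto\EP(\pi|_x)$ as, respectively, the cohomology dimensions and Euler characteristic of a perfect complex of $R$-modules, and then invoke the standard semi-continuity theorem. Concretely, I want to construct, locally on $\Spec(R)$, a bounded complex $K^\bullet$ of finitely generated projective $R$-modules together with canonical isomorphisms
\[ H^i(K^\bullet\otimes_R k(x))\cong \Ext^i_{H(F)}(\pi|_x,k(x)) \]
for every $x\in\Spec(R)$ and every $i\geq 0$. Granting this, Grothendieck's semi-continuity theorem immediately yields upper semi-continuity of each $m^i(\pi|_x)$ and local constancy of $\EP(\pi|_x)=\sum_i(-1)^i\rank_R(K^i)$.

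To build such a $K^\bullet$, I would first resolve $\pi$ in $\Rep(G,R)$ by $R[G(F)]$-modules that are simultaneously projective in $\Rep(G,R)$ and $R$-flat; a natural choice is a resolution $P_\bullet\to\pi$ whose terms are direct sums of compactly induced modules $\cind_{K_j}^{G(F)}R$ for suitably small compact open subgroups $K_j\subset G(F)$. Combining Frobenius reciprocity with Shapiro's lemma for smooth induction, one then obtains
\[ \Ext^i_{H(F)}(\pi|_x,k(x))\cong H^i\bigl(\Hom_{G(F)}(P_\bullet|_x,\Ind_{H(F)}^{G(F)}k(x))\bigr). \]
The crucial point is that this construction admits an integral model: a complex $C^\bullet$ of $R$-modules such that $C^\bullet\otimes_R k(x)$ computes the right-hand side for every $x\in\Spec(R)$. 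Granted such an integral model, the problem reduces to showing that $C^\bullet$ is quasi-isomorphic, locally on $\Spec(R)$, to a perfect complex $K^\bullet$.

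The principal obstacle is exactly this last reduction, which rests on two uniform finiteness statements. First, a uniform cohomological bound: there should exist $N$ such that $\Ext^i_{H(F)}(\pi|_x,k(x))=0$ for all $i>N$ and all $x\in\Spec(R)$. Fiberwise vanishing follows from the running hypothesis (cf.\ Proposition~\ref{EPE}), but upgrading this to uniformity in families is delicate; one expects it to follow from the finite cohomological dimension of $H(F)$ on each Bernstein component of $\Rep(G,\BC)$, combined with a locally finite Bernstein-center decomposition of $\pi$ over $\Spec(R)$. Second, each $H^i(C^\bullet)$ must be finitely generated over $R$; here the admissibility of $\pi$ and the spherical finiteness of $(G,H)$ should combine with the Noetherianity of $R$. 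Once both are secured, a complex over a Noetherian ring whose cohomology is finitely generated, concentrated in bounded degrees, and of bounded Tor-amplitude is locally perfect, which produces the desired $K^\bullet$. The uniform cohomological bound in families is the principal difficulty beyond the unramified-twisting setting treated in \cite{AS20}, and I expect it to be the main technical hurdle in the general case.
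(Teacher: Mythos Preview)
First, note that the statement is a \emph{conjecture}: the paper does not prove it in general. What the paper establishes is Theorem~\ref{main} (Propositions~\ref{regular} and~\ref{Z-compact}), which requires, in addition to the running hypothesis, that the fiber rank of $\pi$ be locally constant on a dense set $\Sigma$, that a dual family $\tilde{\pi}$ with $\tilde{\pi}|_x\cong(\pi|_x)^\vee$ exist, and that either $R$ be regular or $Z(F)$ be compact. Your proposal aims at the full conjecture without these hypotheses, so you are attempting something strictly stronger than what the paper achieves.

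The paper's route and yours diverge at a decisive point. The paper does \emph{not} resolve $\pi$; instead it uses Frobenius reciprocity in the other direction and the dual family to write
\[
m^i(\pi|_x)=\dim_{k(x)}\Ext^i_{\CH(K,E)}\bigl((i_H^GE)^K,\ \tilde{\pi}^K|_x\bigr),
\]
placing the $R$-family in the \emph{target} of $\RHom$ against a fixed source $(i_H^GE)^K$ that is pseudo-coherent over the Noetherian Hecke algebra (Lemma~\ref{Noeth}). Then $\tilde{\pi}^K$ is finite projective over $R$ (from the fiber-rank assumption, Lemma~\ref{proj}), so $\RHom_{\CH(K,E)}((i_H^GE)^K,\tilde{\pi}^K)$ has cohomology finitely generated over $R$ automatically, and base change to $k(x)$ is handled by Lemma~\ref{base II}: either $k(x)$ is perfect ($R$ regular) or $(i_H^GE)^K$ is perfect ($Z(F)$ compact, via Schneider--Stuhler). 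This is precisely what buys perfectness and the base-change identity simultaneously, and it is why the paper needs the extra hypotheses.

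Your approach, by contrast, resolves $\pi$ and maps into $I_H^GR$. Two concrete gaps appear. First, the ``integral model'' $C^\bullet$ is not actually constructed: the natural candidate has terms $\Hom_{G(F)}(\cind_{K_j}^{G(F)}R,\,I_H^GR)\cong (I_H^GR)^{K_j}$, which is an infinite \emph{product} $\prod_{H\backslash G/K_j}R$, and $(\prod R)\otimes_R k(x)\not\cong\prod k(x)$ in general, so the base-change identity $C^\bullet\otimes_R k(x)\cong\Hom_{G(F)}(P_\bullet|_x,I_H^Gk(x))$ fails at the level of terms. Second, even granting a resolution $P_\bullet\to\pi$ by $R$-flat projectives, $P_\bullet|_x\to\pi|_x$ is a resolution only if every syzygy is $R$-flat, which you have not arranged. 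These are the genuine obstacles; by contrast, the ``uniform cohomological bound'' you flag as the principal difficulty is already available uniformly: Proposition~\ref{EPE} gives $m^i(\pi|_x)=0$ for $i>d(G)$ whenever $\pi|_x$ has finite length, with $d(G)$ independent of $x$. The hard part is exactly the integral model and its base change, and the paper's use of $\tilde{\pi}$ is what circumvents it.
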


\begin{remark}\label{Gal}
The following example  in \cite{CF21b} 
shows that the upper-semicontinuity is optimal to expect in general. 
Let $K/F$ be a quadratic field extension and $\theta\in\Gal(K/F)$ be the non-trivial element. The spherical pair $(G:=\BG_m\bs \Res_{K/F}\GL_2,H:= \BG_m\bs \GL_2)$ is not strongly tempered. Consider the  $G(F)$-representation 
$I_P^G(\chi)$ where $P$ is the parabolic subgroup consisting of upper triangular matrices and 
$\chi = (\chi_1,\chi_2)$ is a character of the Levi quotient $M(F) \cong (K^\times)^2/F^\times$. Then $m^i(I_P^G \chi)=0$, $i\geq2$ and 
\begin{itemize}
    \item $m(I_P^G \chi)\leq1$ with the equality holds iff $\chi_1|_{F^\times}=\chi_2|_{F^\times}=1$ or $\mu(\chi):=\chi_1\cdot (\chi_2\circ\theta)=1$;
    \item $m^1(I_P^G \chi)\leq1$ with the equality holds iff $\chi_1|_{F^\times}=\chi_2|_{F^\times}=1$ and $\mu(\chi)\neq 1$;
    \item $\EP(I_P^G \chi)\leq 1$  with the equality holds iff $\mu(\chi)=1$.
\end{itemize}
In particular, consider the family $I_P^G(\sigma \chi_\lambda)$ where
$\sigma = (\xi,1)$  with $\xi:\ F^\times\bs K^\times\to\BC^\times$ is a non-trivial character and 
$\chi_\lambda = (|\cdot|^\lambda, |\cdot|^{-\lambda})$, $\lambda \in \BC$. 
Then as functions of $\lambda$,  
$m^0(I_P^G(\sigma \chi_\lambda))$ and $m^1(I_P^G(\sigma \chi_\lambda))$ 
both jump at $\lambda=0$ while $\EP(I_P^G(\sigma \chi_\lambda))$ is constant.
\end{remark}
To state the main result, we need to introduce more notations. As the local analogue of classical points in eigen-varieties,  we fix   a Zariski dense subset $\Sigma\subset \Spec(R)$ of closed points. We say \dfn{the fiber rank of $\pi$  is locally constant on $\Sigma$} if for any open compact subgroup $K\subset G(F)$, the function $\dim_{k(x)}\pi^K|_x$ is locally constant on $\Sigma$. For any $x\in\Spec(R)$, denote by $(\pi|_x)^\vee$ the smooth dual of $\pi|_x$.
\begin{thm}\label{main}
Let $\pi$  be a finitely generated torsion-free smooth admissible $R[G(F)]$-module whose fiber rank is locally constant on $\Sigma$. Assume moreover there exists a finitely generated torsion-free admissible R[G(F)]-module $\tilde{\pi}$
such that
      for any $x\in\Sigma$, $\tilde{\pi}|_x\cong (\pi|_x)^\vee$.
Then Conjecture \ref{Homo multi conj} holds for $\pi$.
\end{thm}
Before explaining the proof, we make several remarks.
\begin{remark}For the unramified twisting family $\pi$,
\begin{itemize}
    \item the fiber rank is locally constant by construction;
    \item the underlying space $\wh{M}$ is connected and smooth;
    \item the family $\tilde{\pi}$ can be taken as
    $\left\{I_P^G(\sigma^\vee\chi^{-1}) \Big| \chi \in \wh{M} \right\}$.
\end{itemize}
Thus Theorem \ref{main}  covers unramified twisting families. 
\end{remark}
\begin{remark}The existence of the 'dual' module $\tilde{\pi}$ and the locally constancy of fiber rank are necessary for our approach (see the paragraphs after the remark). But these conditions do not impose very serious restrictions: 
\begin{itemize}
    \item the locally  constancy  of fiber rank may holds for general finitely generated torsion-free smooth admissible $R[G(F)]$-modules. If $\pi|_x$ is absolutely irreducible for all $x\in\Sigma$ and $G=\GL_n$, one can deduce the localy constancy of fiber rank from the theory of co-Whittaker modules in \cite{EH14};
\item if $\pi|_x$ is absolutely irreducible for all $x\in\Sigma$ and $G$ is classical, one can construct the $R[G(F)]$-module $\tilde{\pi}$  from $\pi$ by the MVW involution  (see  \cite{Pra19}).
\end{itemize}
\end{remark}

 Now we explain our approach to Theorem \ref{main}. We shall use the language of derived categories(See Section \ref{homo} for the basics). Let $i_H^G E$ be the compact induction of the trivial representation $E$ of $H(F)$ and $\CH(K,E)$ be the level-$K$ Hecke algebra  over $E$. 
 Then by the Frobenius reciprocity law and Bernstein's decomposition theorem (see \cite[Theorem 2.5(1)]{AS20} etc),  for properly chosen open compact subgroup $K\subset G(F)$ (see Proposition \ref{EPE} below)
 $$m^i(\pi|_x)=\dim_{k(x)}\Ext^i_{\CH(K,E)}((i_H^GE)^K,\tilde{\pi}^K|_x) = \dim_{k(x)} H^i\left(\RHom_{\CH(K,E)}( (i_H^GE)^K,
 \tilde{\pi}^K|_x)\right).$$
 By the following upper semi-continuous theorem, this simple observation reduces Theorem \ref{main} to
 \begin{itemize}
     \item the perfectness of $(i_H^GE)^K\in D(\CH(K,E))$, which we show in Proposition \ref{fdE} using the projective resolutions of $G(F)$-representations in \cite[Appendix]{Fli92};
     \item the projectiveness of the $R$-module $\tilde{\pi}^K$  by our assumption on the locally constancy of fiber rank (up to shrinking $R$, see Lemma \ref{proj} below).
 \end{itemize} 
 Here $D(\CH(K,E))$ is the derived category of $\CH(K,E)$-modules. 
 \begin{prop}[Upper semi-continuous theorem] \label{ST}
For any complex $M\in D(R)$, \begin{itemize}
    \item the function $\dim_{k(x)}H^i(M\otimes_R^Lk(x))$ is upper semicontinuous for each $i$ if $M$ is pseudo-coherent, i.e. quasi-isomorphic to a bounded above 
 complex of finite free $R$-modules;
    \item   the Euler Poincare number 
$\sum_i(-1)^i\dim_{k(x)}H^i(M\otimes_R^Lk(x))$ is locally constant if $M$ is perfect i.e. quasi-isomorphic to a bounded above 
and below complex of finite projective $R$-modules. 
\end{itemize}  
\end{prop}
\begin{proof}Item (i)  is
\cite[\href{https://stacks.math.columbia.edu/tag/0BDI}{Lemma 0BDI}]{SP} and Item (ii) is \cite[\href{https://stacks.math.columbia.edu/tag/0BDJ}{Lemma 0BDJ}]{SP}
\end{proof}
More precisely, Theorem \ref{main} holds if
\begin{itemize}
\item[(a)] $\RHom_{\CH(K,E)}((i_H^GE)^K, \tilde{\pi}^K)$ is perfect and 
\item[(b)] there is an isomorphism 
$$\RHom_{\CH(K,E)}((i_H^GE)^K, \tilde{\pi}^K)\otimes_R^L k(x)\cong \RHom_{\CH(K,E)}((i_H^GE)^K,\tilde{\pi}^K|_x).$$
\end{itemize}
As $\tilde{\pi}^K$ is projective,  the isomorphism in (b)
is equivalent to
$$\RHom_{\CH(K,E)}((i_H^GE)^K, \tilde{\pi}^K)\otimes_R^L k(x)\cong 
\RHom_{\CH(K,E)}((i_H^GE)^K,\tilde{\pi}^K\otimes_{R}^Lk(x)).$$
By the perfectness of  $(i_H^GE)^K\in D(\CH(K,E))$, the   isomorphism above holds
by standard  homological algebra (see Lemma \ref{base II}). 

With $(b)$ at hand,  by the general criterion of perfectness given in Lemma \ref{per II} below, the complex
$\RHom_{\CH(K,E)}((i_H^GE)^K, \tilde{\pi}^K)$ is perfect as $\Ext^i_{\CH(K,E)}((i_H^GE)^K, \tilde{\pi}^K)$
is finitely generated over $R$ (see Lemma \ref{Noeth}) 
and there is a positive integer $N$ such that for any closed point $x$, 
$m^i(\pi|_x) = 0$ for any $i \geq N$ (see Proposition \ref{EPE}).

\begin{remark}We briefly compare our approach with that in \cite{AS20}, which deals with  unramfied twisting families associated with $(P,M,\sigma)$.   By Frobenius reciprocity law,
$$\Ext^i_{H(F)}(I_P^G(\sigma\chi),\BC)\cong \Ext^i_{M}(r_M^G(i_H^G\BC), \sigma^\vee\chi^{-1})$$
where $r_M^G$ is the normalized Jacquet module functor from $\Rep(G,\BC)$ to $\Rep(M,\BC)$. In \textit{loc.cit}, the authors work over $\wh{M}$ and make full advantage of  the theory of Bernstein center and Bernstein decomposition to show
that there is a perfect complex $\CG(M,\sigma)$  over $\wh{M}$ associated to  $(M,\sigma^\vee)$ such that
\[\Ext^i_{M}(r_M^G(i_H^G\BC), \sigma^\vee\chi^{-1}) = H^i\left(\RHom_{\BC[\wh{M}]}( \CG(M,\sigma),\delta_\chi )\right)\]
where $\delta_\chi$ is  the skyscraper sheaf at $\chi^{-1}$. Then the locally constancy of Euler-Poincare numbers 
holds by the semicontinuity theorem for coherent sheaves over smooth varieties. 
In comparison, our approach seems more direct: 
\begin{itemize}
  \item it works over $G$ and does not depend on the special form of the family;
  \item  it requires less results from representation theory (while  more results from homological algebra).
\end{itemize}

\end{remark}
We conclude the introduction by the local constancy of $m(\pi|_x)$ for a finitely generated smooth admissible torsion-free $R[G(F)]$-module $\pi$.
\begin{cor}Assume 
the pair $(G,H)$ is strongly tempered and 
\begin{itemize}
\item the fiber rank of $\pi$  is locally constant on $\Sigma$ and there exists a finitely generated smooth admissible torsion-free $R[G(F)]$-module $\tilde{\pi}$ such that $\tilde{\pi}|_x\cong(\pi|_x)^\vee$ for any $x\in\Sigma$,
    \item for any $x\in \Sigma$, $(\pi|_x) \otimes_{k(x),\tau} \BC$ is irreducible and tempered for some field embedding
	$\tau: k(x) \hookrightarrow \BC$.
\end{itemize}
 Then $m(\pi|_x)$ is locally constant on $\Sigma$ if  $m(\sigma)=\EP(\sigma)$ holds for all irreducible tempered $\sigma\in \Rep(G,\BC)$.
 \end{cor}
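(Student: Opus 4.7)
The plan is to combine the local constancy of $\EP(\pi|_x)$ supplied by Theorem~\ref{main} with the hypothetical identity $m(\sigma) = \EP(\sigma)$ on the tempered spectrum. First, all hypotheses of Theorem~\ref{main} are already in force (tacitly we either assume $Z(F)$ is compact or restrict to the regular locus of $\Spec(R)$, which is open and dense since $R$ is reduced and of finite type over a field of characteristic zero). Consequently, $\EP(\pi|_x)$ is a locally constant function of $x$ on $\Spec(R)$, and hence on $\Sigma$.

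Next, I reduce the computation at a point $x \in \Sigma$ to one over $\BC$ via flat base change along $\tau: k(x) \hookrightarrow \BC$. Using the Hecke algebra description exploited in the proof of Theorem~\ref{main}, one has, for a suitable open compact $K \subset G(F)$,
\[
m^i(\pi|_x) \;=\; \dim_{k(x)} \Ext^i_{\CH(K,E)}\bigl((i_H^G E)^K,\, \tilde{\pi}^K|_x\bigr),
\]
and these spaces are finite-dimensional over $k(x)$. Since $\BC$ is flat over $k(x)$ via $\tau$, tensoring a projective $\CH(K,E)$-resolution of $(i_H^G E)^K$ with $\BC$ yields the base change isomorphism
\[
\Ext^i_{H(F)}(\pi|_x,\, k(x)) \otimes_{k(x),\tau} \BC \;\cong\; \Ext^i_{H(F)}\bigl(\pi|_x \otimes_{k(x),\tau} \BC,\; \BC\bigr),
\]
whence $m(\pi|_x) = m(\pi|_x \otimes_{k(x),\tau} \BC)$ and $\EP(\pi|_x) = \EP(\pi|_x \otimes_{k(x),\tau} \BC)$.

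Finally, the temperedness and irreducibility hypothesis at $\Sigma$ places $\pi|_x \otimes_{k(x),\tau} \BC$ in the class for which the assumed identity applies, so
\[
m(\pi|_x) \;=\; m\bigl(\pi|_x \otimes_{k(x),\tau} \BC\bigr) \;=\; \EP\bigl(\pi|_x \otimes_{k(x),\tau} \BC\bigr) \;=\; \EP(\pi|_x)
\]
for every $x \in \Sigma$. Combined with the local constancy of $\EP(\pi|_x)$ on $\Spec(R)$ from the first step, this yields the desired local constancy of $m(\pi|_x)$ on $\Sigma$. The only non-formal input is Theorem~\ref{main}; the remaining obstacle, if any, is the routine verification of flat base change for Ext in the category of smooth representations, which is immediate from the projective $\CH(K,E)$-resolution already built in the proof of Theorem~\ref{main}.
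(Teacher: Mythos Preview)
The paper states this corollary without proof, treating it as an immediate consequence of Theorem~\ref{main} together with the assumed identity $m(\sigma)=\EP(\sigma)$ on the tempered spectrum. Your argument supplies precisely the expected details: invoke Theorem~\ref{main} for the local constancy of $\EP(\pi|_x)$, then identify $m(\pi|_x)$ and $\EP(\pi|_x)$ with their complex counterparts via flat base change along $\tau:k(x)\hookrightarrow\BC$ (the same mechanism as in Proposition~\ref{EPE} and Lemma~\ref{base III}), and finally apply the conjectural identity to the irreducible tempered representation $\pi|_x\otimes_{k(x),\tau}\BC$. This is correct and is exactly what the authors leave to the reader; your parenthetical remark about the regularity/compactness hypothesis implicit from Theorem~\ref{main} is also appropriate, since the corollary inherits those assumptions.
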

We  remark that 
\begin{itemize}
    \item when $H(F)$ is compact,   the upper semi-continuity of multiplicities  holds under weaker  assumptions (see Proposition \ref{compact H-dist} below);
    \item   when  $(G,H)$ is strongly tempered and \dfn{Gelfand}, i.e. $m(\sigma)\leq 1$  for all $\sigma\in\Rep(G,\BC)$ irreducible,  the local constancy of multiplicities can be deduced from  the upper semi-continuity and the meromorphy property of canonical local periods considered in \cite{CF21a} (see also \cite{BD08} for analytic families). 
\end{itemize}

\section{Homological algebra}\label{homo}
 For any (unital but possibly noncommutative) ring $A$, denote by $\Mod_A$  (resp. $K(A)$) the category of left $A$-modules (resp. complexes of $A$-modules). The derived category $(D(A),q)$ consists a  category  $D(A)$  with a functor $q: K(A)\to D(A)$ such that any functor $F:\ K(A)\to\CC$ (to any category) which sends quasi-isomorphisms to isomorphisms factors uniquely through $q: K(A)\to D(A)$ (see \cite[Chapter III, Section 2]{GM03}). We usually   denote the derived category by $D(A)$.  
The  tensor product  and Hom functor  on $\Mod_A$ admit  derived version on $D(A)$ (see  \cite[Chapter 15]{SP} for $A$ commutative and \cite{Ye12} for general $A$). In particular, we record that
 \begin{itemize}
     \item for any $A$-algebra $A^\prime$,  viewed as a left $A^\prime$-module and right $A$-module, the tensor product   functor
$$A^\prime\otimes_A-:\ \Mod_A\to \Mod_{A^\prime},\quad M\mapsto A^\prime\otimes_AM$$
has the derived version $$A^\prime\otimes^L_A-:\ D(A)\to D(A^\prime).$$
Note that if $M\in D(A)$ is represented by a bounded above complex $P^\bullet\in K(A)$ of projective $A$-modules,  $A^\prime\otimes^L_A M$ is represented by $A^\prime\otimes_A P^\bullet$;
\item for any $N\in \Mod_A$, the  functor $$\Hom_A(-,N):\ \Mod_A\to \Mod_{\BZ};\quad M\mapsto \Hom_A(M,N)$$
has the derived version
$$\RHom_A(-,N):\ D(A)\to D(\BZ).$$
Note that if $M\in D(A)$ is represented by a bounded above complex $P^\bullet\in K(A)$ of projective $A$-modules, $\RHom_A(M,N)$ is represented by the complex
$\Hom(P^\bullet, N).$
Moreover, if $N\in \Mod_A$ admits a compatible left $R$-module structure for some commutative ring $R$,  the functor $\RHom_A(-,N)$ admits a natural lifting, which we denote by the same notation,   $$\RHom_A(-,N):\ D(A)\to D(R).$$
\end{itemize}
 The following results on base change morphisms in derived category are crucial to our approach. Recall that a complex $M\in D(A)$ is called \dfn{pseudo-coherent} (resp.  \dfn{perfect}) if it is quasi-isomorphic to a bounded above (resp. above and below) complex of finite free (resp. projective) $A$-modules. Let $R$ be a commutative ring.
\begin{lem} \label{base I}Let $A^\prime$ be a flat $A$-algebra and take $N\in \Mod_{A^\prime}$. Then for any pseudo-coherent $M\in D(A)$, there is a canonical isomorphism $$\RHom_A(M,N)\cong \RHom_{A^\prime}(A^\prime\otimes_A^L M,N).$$
\end{lem}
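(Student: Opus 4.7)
The plan is to reduce the claimed isomorphism to the classical tensor--restriction adjunction after replacing $M$ by a concrete resolution. Because $M$ is pseudo-coherent, I would fix a quasi-isomorphism $P^\bullet \to M$ with $P^\bullet$ a bounded above complex of finite projective $A$-modules, and then compute both sides directly on $P^\bullet$. Each $P^n$ is flat, so $A' \otimes_A^L M$ is represented by the honest tensor complex $A' \otimes_A P^\bullet$; moreover, since a finite projective $A$-module is a direct summand of some $A^{k_n}$, every $A' \otimes_A P^n$ is a direct summand of $(A')^{k_n}$, hence finite projective over $A'$. Thus $A' \otimes_A P^\bullet$ is itself a bounded above complex of finite projective modules (now over $A'$), so $\RHom_{A'}(A' \otimes_A^L M, N)$ is represented by $\Hom_{A'}^\bullet(A' \otimes_A P^\bullet, N)$, while $\RHom_A(M, N)$ is represented by $\Hom_A^\bullet(P^\bullet, N)$.

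The core input is then the classical tensor--restriction adjunction: for any $A$-module $P$ and any $A'$-module $N$, the assignment $g \mapsto (m \mapsto g(1 \otimes m))$ is a natural isomorphism $\Hom_{A'}(A' \otimes_A P, N) \simto \Hom_A(P, N)$. Applied degreewise to $P^\bullet$, this yields an isomorphism of complexes $\Hom_{A'}^\bullet(A' \otimes_A P^\bullet, N) \cong \Hom_A^\bullet(P^\bullet, N)$ which intertwines the differentials by the naturality of the adjunction in $P$. Passing to the derived category produces the isomorphism asserted in the lemma.

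The main point that requires care is canonicity: one must check that the resulting isomorphism is independent of the choice of $P^\bullet$ and is functorial in $M$. This is the step I expect to need the most bookkeeping, but it follows formally from the fact that any two bounded above complexes of finite projective $A$-modules resolving $M$ are linked by a chain of quasi-isomorphisms, over which both $A' \otimes_A -$ and $\Hom_A^\bullet(-, N)$ transport compatibly with the tensor--restriction adjunction. Incidentally, the flatness of $A'$ over $A$ does not actually enter the construction above once $M$ is pseudo-coherent; its role is rather to ensure that $A' \otimes_A^L -$ is the familiar exact functor on all of $D(A)$, which is the natural framework for the subsequent applications.
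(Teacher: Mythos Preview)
Your proof is correct and follows essentially the same route as the paper: represent $M$ by a bounded above complex $P^\bullet$ of finite projective $A$-modules, compute both derived Hom's as honest Hom complexes on $P^\bullet$ and $A'\otimes_A P^\bullet$, and then apply the tensor--restriction adjunction $\Hom_{A'}(A'\otimes_A P,N)\cong\Hom_A(P,N)$ degreewise. The paper's proof is terser, omitting your remarks on canonicity and on the (in)essentiality of flatness, but the argument is the same.
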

\begin{proof}Assume $M$ is represented by the bounded above complex $P^\bullet\in K(A)$ of finite projective $A$-modules. Then 
$\RHom_A(M,N)$ is represented by $\Hom_A(P^\bullet, N)$
and $\RHom_{A^\prime}(A^\prime\otimes_A^L M,N)$ is represented by $\Hom_{A^\prime}(A^\prime\otimes_AP^\bullet, N)$. The desired result follows from  the  canonical isomorphism
$$\Hom_A(P, Q)\cong\Hom_{A^\prime}(A^\prime\otimes_AP, Q)$$
for any $A$-module $P$ and $A^\prime$-module $Q$ and the exactness of $A^\prime\otimes_A$. 
\end{proof}
\begin{lem}\label{base II}Assume that $R$ is commutative and $N\in \Mod_A$ admits a compatible left $R$-module structure. Then for any perfect $M\in D(A)$  and $P\in D(R)$, one has the canonical isomorphism 
$$\RHom_A(M,N)\otimes_R^L P\to \RHom_A(M, N\otimes_R^LP).$$
\end{lem}
\begin{proof}
Represent $M$ by a bounded above and below complex $Q^\bullet$ of finite projective modules and $P$ by any complex $P^\bullet$. Then $\RHom_A(M,N)\otimes_R^LP$ is represented by the total complex $\mathrm{Tot}(\Hom_A(Q^\bullet,N)\otimes_R P^\bullet)$ and  $\RHom_A(M, N\otimes_R^LP)$ is represented by the  complex $\Hom_A^\bullet(Q^\bullet, N\otimes_R P^\bullet)$ with 
$$\Hom^n_A(Q^\bullet,N\otimes_R P^\bullet):=\prod_{n=p+q}\Hom_A(Q^{-p}, N\otimes_R P^q).$$ Note that for any $W\in \Mod_A$ finite projective and $V\in \Mod_R$, one has
$$\Hom_A(W,N)\otimes_R V\cong \Hom_A(W, N\otimes_R V).$$
Thus the complexes $\mathrm{Tot}(\Hom_A(Q^\bullet,N)\otimes_R P^\bullet)$ and $\Hom_A^\bullet(Q^\bullet, N\otimes_R P^\bullet)$ are isomorphic and we are done.
\end{proof}
\begin{lem}\label{base III}Assume $A$ is an algebra over $R$ and let $R^\prime$ be a flat commutative $R$-algebra. Then for  $M\in D(A)$  pseudo-coherent and $N\in\Mod_A$, the canonical morphism $$\RHom_A(M,N)\otimes_R^L R^\prime\to \RHom_{A\otimes_R R^\prime}(M\otimes_{R}^LR^\prime, N\otimes_R^L{R^\prime})$$
is an isomorphism. In particular if $A$ is Noetherian,  one has natural isomorphism 
$$\Hom_A(M,N)\otimes_R R^\prime\to \Hom_{A\otimes_R R^\prime}(M\otimes_{R}R^\prime, N\otimes_R{R^\prime})$$
for any  finitely generated  $M\in \Mod_A$.
\end{lem}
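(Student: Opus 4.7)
The plan is to reduce the derived statement to a termwise statement of complexes by using a concrete resolution of $M$, and then invoke a standard isomorphism for finite projective modules.

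Since $M$ is pseudo-coherent in $D(A)$, represent $M$ by a bounded above complex $P^\bullet$ of finite projective $A$-modules. Flatness of $R'$ over $R$ gives $\otimes_R^L R' = \otimes_R R'$, so that $M \otimes_R^L R'$ is represented by the complex $P^\bullet \otimes_R R'$, which is a bounded above complex of finite projective $A \otimes_R R'$-modules (since each $P^n \otimes_R R'$ is a direct summand of $(A \otimes_R R')^{k_n}$ whenever $P^n$ is a direct summand of $A^{k_n}$). Consequently:
\begin{itemize}
    \item $\RHom_A(M, N) \otimes_R^L R'$ is represented by $\Hom_A(P^\bullet, N) \otimes_R R'$;
    \item $\RHom_{A \otimes_R R'}(M \otimes_R^L R', N \otimes_R^L R')$ is represented by $\Hom_{A \otimes_R R'}(P^\bullet \otimes_R R', N \otimes_R R')$.
\end{itemize}

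Thus the task reduces to showing that, for each finite projective $A$-module $P$, the canonical map
\[ \Hom_A(P, N) \otimes_R R' \longrightarrow \Hom_{A \otimes_R R'}(P \otimes_R R', N \otimes_R R') \]
is an isomorphism, and then checking that this termwise isomorphism assembles into a quasi-isomorphism of complexes. The first assertion is an elementary additivity argument: the map is obviously an isomorphism when $P = A$ (both sides identify with $N \otimes_R R'$), hence also when $P = A^{\oplus n}$ by additivity of $\Hom$ in the first variable; passing to a direct summand gives the general case for finite projective $P$. Assembling these isomorphisms degreewise yields an isomorphism of complexes, which gives the desired isomorphism in $D(R')$.

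For the final, ``in particular'' assertion, suppose $A$ is Noetherian and $M$ is a finitely generated $A$-module, viewed as a complex concentrated in degree $0$. Since $A$ is Noetherian, $M$ admits a resolution by finite free $A$-modules and is therefore pseudo-coherent, so the derived statement applies. Taking $H^0$ of both sides and using that $- \otimes_R R'$ is exact (by flatness of $R'$) to commute $H^0$ with $\otimes_R^L R' = \otimes_R R'$, we obtain the isomorphism
\[ \Hom_A(M, N) \otimes_R R' \simto \Hom_{A \otimes_R R'}(M \otimes_R R', N \otimes_R R'). \]
No essential obstacle is expected: the only subtlety is keeping track of the left/right module structures when $A$ is noncommutative, but this is already built into the construction of $\RHom_A$ and the tensor products are taken over the commutative ring $R$, so the argument proceeds without difficulty.
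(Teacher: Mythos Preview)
Your proof is correct and follows essentially the same approach as the paper: represent $M$ by a bounded above complex of finite projective (the paper says finite free) $A$-modules, use flatness of $R'$ to replace derived tensor products by ordinary ones, and reduce to the elementary termwise isomorphism $\Hom_A(P,N)\otimes_R R'\cong \Hom_{A\otimes_R R'}(P\otimes_R R', N\otimes_R R')$ for $P$ finite projective. Your treatment is in fact slightly more detailed than the paper's, spelling out the additivity argument and the ``in particular'' clause explicitly.
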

\begin{proof} Take a bounded above complex $P^\bullet$ of finite free $A$-modules representing $M\in D(A)$. Then 
$\RHom_A(M,N)\otimes_R^L R^\prime$ is represented by 
$\Hom_A(P^\bullet, N)\otimes_R R^\prime$
and $\RHom_{A\otimes_R R^\prime}(M\otimes_{R}^LR^\prime, N\otimes_R^L{R^\prime})$ is represented by $\Hom_{A\otimes_R R^\prime}(P^\bullet\otimes_{R}R^\prime, N\otimes_R{R^\prime})$.  The desired result follows from  the  canonical isomorphism
$$\Hom_A(P, Q)\otimes_R R^\prime\cong\Hom_{A\otimes_R R^\prime}(P\otimes_RR^\prime, Q\otimes_R R^\prime)$$
for any finite free $A$-module $P$ and arbitrary $A$-module $Q$.

By \cite[Lemma 064T]{SP}, any finitely generated $A$-module is pseudo-coherent when $A$ is Noetherian and the 'in particular' part  follows.
    \end{proof}
Now we turn to perfect complexes over commutative rings. Let $R$ be a commutative Noetherian ring.
For   any $x\in\Spec(R)$, let $k(x)$ be the residue field.  
       \begin{lem}\label{per II}A complex $M\in D(R)$  is perfect if the following conditions holds:
        \begin{enumerate}[(i)]
            \item the $R$-module $H^i(M)$ is finitely generated for each $i\in \BZ$;
            \item there exists $a<b\in\BZ$ such that for all closed point $x\in \Spec(R)$, $H^i(M\otimes_R^L k(x))=0$ if $i\notin [a,b]$.
        \end{enumerate}
         \end{lem}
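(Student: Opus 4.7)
The plan is to verify the two standard properties characterising perfect complexes over a Noetherian ring---pseudo-coherence and finite Tor-amplitude---separately. Since both are local on $\Spec R$, I localise at an arbitrary maximal ideal $\fm$ and reduce to the case where $R$ is Noetherian local with residue field $k = k(\fm)$; hypothesis (ii) then specialises to the unique closed point, giving $H^i(M \otimes_R^L k) = 0$ for $i \notin [a,b]$.

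The bulk of the work is establishing pseudo-coherence, which amounts to showing $H^i(M) = 0$ for $i \gg 0$, since finite generation of each $H^i$ is immediate from (i). Set $s := \sup\{i : H^i(M) \neq 0\}$. If $s$ is finite, then in the convergent spectral sequence
\[ E_2^{p,q} = \Tor_{-p}^R(H^q(M), k) \Rightarrow H^{p+q}(M \otimes_R^L k) \]
the only nonzero $E_2$-term of total degree $s$ is $E_2^{0,s} = H^s(M) \otimes_R k$, which is nonzero by Nakayama and is unaffected by differentials (there are no nonzero terms $E_r^{-r, s+r-1}$ for $r \geq 2$ since $H^{s+r-1}(M) = 0$, and targets of outgoing differentials vanish); hence $H^s(M \otimes_R^L k) \cong H^s(M) \otimes_R k \neq 0$, and (ii) forces $s \leq b$. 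The delicate step is ruling out $s = +\infty$; the approach is to build a minimal semi-free resolution $F^\bullet \to M$ over the local ring $R$, with each $F^i$ finite free and all differentials having matrix entries in $\fm$, so that minimality turns $F^\bullet \otimes_R k$ into a complex with zero differential. One then reads $H^i(M \otimes_R^L k) \cong F^i \otimes_R k$, and (ii) forces $F^i = 0$ for $i > b$, contradicting unboundedness of $H^\ast(M)$. The construction of such a minimal resolution in the absence of an a priori top degree of cohomology is the principal technical obstacle.

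Once $M$ is pseudo-coherent, finite Tor-amplitude follows from the standard local-to-global criterion: a pseudo-coherent complex over a Noetherian ring has Tor-amplitude in $[a,b]$ if and only if $H^i(M \otimes_R^L k(x)) = 0$ for $i \notin [a,b]$ at every $x \in \Spec R$. Upper semi-continuity of residue-field Betti numbers for pseudo-coherent $M$, combined with the fact that every nonempty closed subset of $\Spec R$ contains a closed point (Noetherian Zorn), propagates the vanishing at closed points in (ii) to every $x$. Pseudo-coherence and finite Tor-amplitude together yield perfectness, completing the proof.
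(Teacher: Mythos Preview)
Your argument contains a genuine gap precisely where you flag it. To rule out $s = +\infty$ you propose to build a minimal semi-free resolution $F^\bullet \to M$ with each $F^i$ finite free and $d(F^\bullet) \subset \fm F^\bullet$, but you do not construct it, and the standard construction of such a resolution proceeds downward from the top nonvanishing cohomology degree---so its existence presupposes exactly the boundedness you are trying to establish. Minimal K-projective resolutions of unbounded complexes over a local ring do exist in some frameworks (Avramov--Foxby--Halperin, for instance), but this is a nontrivial result that you would need to cite or prove; as written, the sentence ``the construction of such a minimal resolution \ldots\ is the principal technical obstacle'' is an acknowledgement that the proof is incomplete rather than a resolution of the difficulty. (A minor aside: in your finite-$s$ spectral-sequence step, $E_2^{0,s}$ is not the \emph{only} nonzero term in total degree $s$---the terms $\Tor_j(H^{s-j}(M),k)$ for $j>0$ may well be nonzero---but your parenthetical correctly shows that $E_2^{0,s}$ survives to $E_\infty$, which is all you need.)

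The paper closes this gap by a completely different route that never invokes minimal resolutions. For each maximal ideal $\fp$ one shows by d\'evissage along the short exact sequences $0 \to \fp^n/\fp^{n+1} \to R/\fp^{n+1} \to R/\fp^n \to 0$ (using that $\fp^n/\fp^{n+1}$ is a finite $k(\fp)$-vector space) that $H^i(M \otimes_R^L R/\fp^n) = 0$ for all $n$ and all $i > b$. Passing to the homotopy limit, the Milnor $R^1\lim$ sequence together with the identification of derived $\fp$-adic completion (Stacks \texttt{0A06}, using finite generation of each $H^i(M)$) gives $H^i(M) \otimes_R \hat{R}_\fp = 0$ for $i > b+1$; faithful flatness of completion then yields $H^i(M) = 0$ for $i > b+1$. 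This establishes pseudo-coherence directly, after which your final paragraph (or the paper's citation of Stacks \texttt{068W}) finishes the argument.
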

    \begin{proof}   By \cite[\href{https://stacks.math.columbia.edu/tag/068W}{Lemma 068W}]{SP}, when $M$ is pseudo-coherent, Item (ii) implies $M$ is perfect.  By \cite[\href{https://stacks.math.columbia.edu/tag/064T}{Lemma 064T}]{SP} the assumption $R$ is Noetherian implies that all $H^i(M)$ is pseudo-coherent. 
   Thus  by \cite[\href{https://stacks.math.columbia.edu/tag/066B}{Lemma 066B}]{SP},  $M$ is pseudo-coherent if   $H^i(M)=0$ for all $i>b+1$. To see this, consider the exact sequence 
     $$0\to \fp^n/\fp^{n+1}\to R/\fp^{n+1}\to R/\fp^n\to0$$
     for any maximal ideal $\fp\subset R$. By induction, one finds that 
$H^i(M\otimes_{R}^LR/\fp^n)=0$ for all $i>b$.
By \cite[\href{https://stacks.math.columbia.edu/tag/0CQE}{Lemma 0CQE}]{SP} and  \cite[\href{https://stacks.math.columbia.edu/tag/0922}{Proposition 0922}]{SP}, one has the short exact sequence
$$0\to R^1\lim H^{i-1}(M\otimes_R^L R/\fp^n)\to H^i(R\lim M\otimes_R^L R/\fp^n)\to \lim H^i(M\otimes_R^L R/\fp^n)\to0.$$
By \cite[\href{https://stacks.math.columbia.edu/tag/0A06}{Lemma 0A06}]{SP}, one has 
$$H^i(M)\otimes_R\hat{R}_\fp=H^i(R\lim_n M\otimes_R^L R/\fp^n).$$
Thus $H^i(M)\otimes_R\hat{R}_\fp=0$ for all $i>b+1$ and all maximal ideal $\fp\subset R$. Consequently, $H^i(M)=0$ for all $i>b+1$ and we are done.
\end{proof}
Finally, we record the following result for a commutative Noetherian ring $R$.
\begin{lem}\label{proj} For any finitely generated $R$-module $M$,  the fiber rank function $$\beta(x):\ \Spec(R)\to \BN;\quad x\mapsto \dim_{k(x)}M\otimes_R k(x)$$ is upper-semicontinuous.
If  $R$ is moreover reduced, then  $M$ is projective iff $\beta$ is locally constant.
\end{lem}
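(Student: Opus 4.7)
The plan is to treat the two assertions separately, each by standard Nakayama-plus-localization arguments.

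For upper semi-continuity of $\beta$, I would fix $x\in\Spec(R)$ with $\beta(x)=n$ and choose elements $m_1,\dots,m_n\in M$ whose images form a $k(x)$-basis of $M\otimes_R k(x)$. By Nakayama's lemma applied to the localization $R_x$ (noting $M$ is finitely generated, hence $M_x$ is finitely generated over $R_x$), the $m_i$ already generate $M_x$ as an $R_x$-module. Consequently the cokernel $C$ of the map $\varphi:R^n\to M$, $e_i\mapsto m_i$, is a finitely generated $R$-module with $C_x=0$. Since $R$ is Noetherian, the annihilator of $C$ is a finitely generated ideal whose vanishing locus is a closed set not containing $x$; its open complement $U\ni x$ then satisfies $\beta(y)\leq n$ for all $y\in U$, proving upper semi-continuity.

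For the second claim, the forward direction is classical: a finitely generated projective module over a Noetherian ring is locally free at every prime, so $\beta$ is locally constant. For the converse, assume $R$ is reduced and $\beta$ is locally constant. Fix a maximal ideal $\fp$ with $\beta(\fp)=n$. As above, choose $m_1,\dots,m_n\in M$ mapping to a $k(\fp)$-basis of $M\otimes k(\fp)$; the resulting map $\varphi:R^n\to M$ is surjective after localizing at $\fp$, and by shrinking to a basic open neighborhood $U=\Spec(R_f)$ of $\fp$ (using that the finitely generated cokernel vanishes near $\fp$) we obtain a short exact sequence
$$0\to K\to R_f^n\to M_f\to 0$$
with $\beta$ identically $n$ on $U$.

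The key step is to show $K=0$. For any $\fq\in U$, tensoring with $k(\fq)$ and using $\beta(\fq)=n$ shows that the surjection $k(\fq)^n\to M_f\otimes k(\fq)$ is an isomorphism between spaces of equal dimension. Now specialize to a minimal prime $\fq\subset R_f$: since $R_f$ is reduced, $(R_f)_\fq$ is a field, so the localized sequence $0\to K_\fq\to (R_f)_\fq^n\to (M_f)_\fq\to 0$ is a short exact sequence of $k(\fq)$-vector spaces in which the right-hand map is an isomorphism, forcing $K_\fq=0$. Because $R_f$ is reduced, the natural map $R_f\hookrightarrow\prod_{\fq\text{ minimal}}(R_f)_\fq$ is injective, so $R_f^n$ injects into the corresponding product. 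Since $K\subset R_f^n$ vanishes after localization at every minimal prime, we conclude $K=0$, i.e.\ $M_f\cong R_f^n$ is free. This gives local freeness of $M$ at every point of $\Spec(R)$, and finite generation over the Noetherian ring $R$ upgrades local freeness to projectivity. The main subtlety, and the only place reducedness is used, is the final injectivity $R_f\hookrightarrow\prod(R_f)_\fq$ deducing global vanishing of $K$ from vanishing at minimal primes.
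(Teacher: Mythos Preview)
Your argument is correct. The Nakayama step for upper semi-continuity is the standard one, and your treatment of the converse in part two is clean: localizing at minimal primes of the reduced ring $R_f$ to kill the kernel $K$, then using the injection $R_f\hookrightarrow\prod_{\fq\ \text{minimal}}(R_f)_\fq$ to conclude $K=0$, is exactly the right idea and correctly isolates where reducedness enters.

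The paper, by contrast, does not argue directly: it deduces upper semi-continuity from its Proposition~\ref{ST} (the semi-continuity theorem for fibers of complexes in $D(R)$, citing \cite[0BDI]{SP}), applied to $M$ viewed as a complex concentrated in degree zero, and for the projectivity criterion it simply refers to \cite[0FWG]{SP}. So your route is genuinely different in spirit: you give an elementary, self-contained module-theoretic proof, whereas the paper leverages the derived-category machinery already set up for the main results and otherwise defers to the Stacks Project. Your approach has the advantage of being transparent about exactly which hypotheses (Noetherian, reduced) are used and where; the paper's approach keeps the exposition uniform with the surrounding homological framework and avoids reproving a standard fact.
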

\begin{proof}The first part follows from Proposition \ref{ST}. For the second part, see  \cite[\href{https://stacks.math.columbia.edu/tag/0FWG}{Lemma 0FWG}]{SP}.
    \end{proof}

\section{Homological  multiplicities}\label{Homo mul}
Let $(G,H)$ be a spherical pair of reductive groups over $p$-adic field $F$. Let $I_H^G\BC$ be  the space $$\{f:\ G(F)\to\BC\ \mathrm{smooth}\mid f(hg)=f(g)\ \forall\ h\in H(F),\ g\in G(F)\}$$ on which $G(F)$ acts by right translation and
$i_H^G\BC\subset I_H^G\BC$ be the subspace consisting of functions which are compactly supported modulo $H(F)$.  Since $H(F)$ is unimodular,  $I_H^G\BC$ and $i_H^G\BC$ are just the normalized induction and normalized compact induction  of the trivial representation $\BC$  of $H(F)$ respectively.

By  \cite[Proposition 2.5]{Pra18},  $$m^i(\sigma)=\dim_\BC\Ext^i_{G(F)}(\sigma,I_H^G\BC)=\dim_\BC\Ext^i_{G(F)}(i_H^G\BC,\sigma^\vee),\quad \forall\ \sigma\in\Rep(G,\BC).$$ 
 For any compact open subgroup $K\subset G(F)$, let $\CH(K,\BC)$ be the Hecke algebra of $\BC$-valued bi-$K$-invariant Schwartz functions on $G(F)$. Then by Bernstein's decomposition theorem (see \cite[Theorem 2.5(1)]{AS20} etc), there exists a  neighborhood basis $\{K\}$ of $1\in G(F)$ consisting of  \dfn{splitting} (see \cite{AAG12} for the notation) open compact subgroups  such that 
   $\CH(K,\BC)$ is Noetherian, the subcategory $\CM(G,K,\BC)$ of representations generated by their $K$-fixed vectors is a direct summand of $\Rep(G,\BC)$ and the functor $\sigma\mapsto \sigma^K$ induces an equivalence of categories $\CM(G,K,\BC)\cong \Mod_{\CH(K,\BC)}$. Thus for
 $\sigma^\vee\in M(G,K,\BC)$ with $K$ splitting,
$$\Ext_{\CH(K,\BC)}^i((i_H^G\BC)^K,(\sigma^\vee)^K)\cong \Ext_{G(F)}^i(i_H^G\BC,\sigma^\vee),\quad \forall\ i\in\BZ.$$
 Under our working hypothesis \begin{center}
    the multiplicity $m(\sigma)$ is \dfn{finite} for all irreducible $\sigma\in\Rep(G,\BC)$, 
\end{center}
 $i_H^G\BC$  is {\em locally finitely generated}, i.e. for any compact open subgroup $K\subset G(F)$,  $(i_H^G\BC)^K$  is finitely generated over $\CH(K,\BC)$, by   \cite[Theorem A]{AAG12}. Thus for $K$ splitting, $(i_H^G\BC)^K$ admits a resolution by finite projective $\CH(K,\BC)$-modules and consequently for  $\sigma\in\Rep(G,\BC)$ such that $\sigma^\vee\in M(G,K,\BC)$,  $$m^i(\sigma)=\dim_\BC\Ext^i_{\CH(K,\BC)}((i_H^G\BC)^K,(\sigma^\vee)^K)<\infty,\quad \forall i\in\BN.$$
 In particular, for any $\sigma\in\Rep(G,\BC)$ of finite length,   $m^i(\sigma)$ is finite for all $i$. Finally by \cite[Corollary III.3.3]{Pra18}, for any finite length $\sigma\in\Rep(G,\BC)$, $m^i(\sigma)=0$ for $i>d(G)$, the split rank of $G$. 

Now we change the coefficient field $\BC$ to a  subfield $E\subset\BC$.   Let $\Rep(G,E)$ be category of smooth $G(F)$-representations over $E$. For any open compact subgroup $K\subset G(F)$,  let $\CH(K,E)$ be the Hecke algebra of $E$-valued bi-$K$-invariant Schwartz functions on $G(F)$,
    and $M(G,K,E)\subset \Rep(G,E)$ be the subcategory of representations generated by their $K$-fixed vectors.
 For  any $\sigma\in \Rep(G,E)$, set $$m^i(\sigma):=\dim_E \Ext^i_{H(F)}(\sigma,E),\ \forall\ i\in\BN,\quad \EP(\sigma):=\sum_i(-1)^im^i(\sigma).$$ 
  Let $i_H^GE\in\Rep(G,E)$ be the compact induction of the trivial $H(F)$-representation $E$. For any  open compact subgroup $K\subset G(F)$, 
  \begin{lem}\label{Noeth}For any splitting open compact subgroup $K\subset G(F)$, the Hecke algebra $\CH(K,E)$ is Noetherian and $(i_H^GE)^K$ is finitely generated over  $\CH(K,E)$.
  \end{lem}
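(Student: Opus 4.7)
The plan is to descend both statements from the analogous results over $\BC$ (already recalled in the paper) using that $\BC/E$ is faithfully flat. The key preliminary identifications are
\[
\CH(K,\BC) \;\cong\; \CH(K,E)\otimes_E \BC, \qquad (i_H^G\BC)^K \;\cong\; (i_H^G E)^K\otimes_E \BC.
\]
The first is immediate from the definition. For the second, compact induction commutes with scalar extension as a functor on smooth representations, and the $K$-fixed-vector functor is given by applying the idempotent $e_K$ (available because $K$ is compact open and we work in characteristic zero), which also commutes with $-\otimes_E\BC$. Once these are in place, everything follows from faithfully flat descent.

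For the Noetherian statement, I would take an ascending chain $I_1\subset I_2\subset\cdots$ of left ideals in $\CH(K,E)$. Tensoring over $E$ with $\BC$ (which is flat, so preserves injections) yields an ascending chain $I_1\otimes_E\BC\subset I_2\otimes_E\BC\subset\cdots$ of left ideals in $\CH(K,\BC)$. By the Bernstein-decomposition argument recalled just before the lemma, $\CH(K,\BC)$ is Noetherian, so this chain stabilizes. Faithful flatness of $\BC/E$ (so that $I\otimes_E\BC = J\otimes_E\BC$ inside $\CH(K,E)\otimes_E\BC$ implies $I=J$) then forces the original chain to stabilize.

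For the finite generation, I would invoke Theorem A of \cite{AAG12} (with the splitting hypothesis on $K$) to obtain finitely many generators $v_1,\dots,v_n \in (i_H^G\BC)^K$ over $\CH(K,\BC)$. Under the identification $(i_H^G\BC)^K = (i_H^G E)^K\otimes_E\BC$, each $v_i$ is a finite $\BC$-linear combination of elements $w_{i,j} \in (i_H^G E)^K$. Let $N\subset (i_H^G E)^K$ be the $\CH(K,E)$-submodule generated by the finite collection $\{w_{i,j}\}$. Then the inclusion $N\hookrightarrow (i_H^G E)^K$, tensored with the flat $E$-algebra $\BC$, becomes an inclusion $N\otimes_E\BC\hookrightarrow (i_H^G\BC)^K$ of $\CH(K,\BC)$-modules whose image contains every $v_i$ and hence equals $(i_H^G\BC)^K$. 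Faithful flatness of $\BC/E$ upgrades this equality after base change to the equality $N = (i_H^G E)^K$, showing that $(i_H^G E)^K$ is finitely generated over $\CH(K,E)$.

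No substantive obstacle is expected: the argument is pure faithfully flat descent, with the only mild care needed at the very first step in checking that the two natural base-change identifications $\CH(K,\BC)\cong\CH(K,E)\otimes_E\BC$ and $(i_H^G\BC)^K\cong (i_H^G E)^K\otimes_E\BC$ are correct (the latter requires that compact induction and $e_K$-invariants commute with scalar extension, which holds in characteristic zero).
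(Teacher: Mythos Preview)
Your proposal is correct and matches the paper's own proof essentially line for line: both arguments use the identifications $\CH(K,\BC)\cong\CH(K,E)\otimes_E\BC$ and $(i_H^G\BC)^K\cong(i_H^GE)^K\otimes_E\BC$, the ascending-chain argument for the Noetherian property, and the trick of writing $\BC$-generators as $\BC$-linear combinations of $E$-rational vectors for finite generation. The only differences are cosmetic---you spell out the faithful-flatness descent that the paper leaves as ``clearly'' and ``consequently'', and you reverse the order of the two parts.
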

  \begin{proof}
Note that $(i_H^GE)^K\otimes_{E}\BC=(i_H^G\BC)^K$. Take generators $\{ y_i=\sum_j f_{i,j}\otimes a_{i,j}\}$ of $(i_H^G\BC)^K$ over $\CH(K,\BC)$ with $f_{i,j}\in (i_H^GE)^K$.  Let $V\subset i_H^GE$ be the $\CH(K,E)$-submodule generated by $f_{i,j}$. Since $y_i$ belongs to $V\otimes_E \BC$ for each $i$, one has 
$V\otimes_E \BC= (i_H^GE)^K\otimes_{E}\BC$. Consequently, $V=i_H^GE$ and $i_H^GE$ is locally finitely generated.

Take any ascending chain of left ideals of $\CH(K,E)$
	$$I_0\subset I_1\subset\cdots\subset I_n\subset\cdots.$$
	Then $I_i\otimes_{E}\BC$ forms an ascending chain of left ideals of $\CH(K,\BC)\cong \CH(K,E)\otimes_{E}\BC$. Since $\CH(K,\BC)$ is Noetherian, we have that for some $n$, $$I_n\otimes_{E}\BC=I_{n+1}\otimes_{E}\BC=\cdots.$$
 Consequently, $I_n=I_{n+1}=\cdots$	 and $\CH(K,E)$ is Noetherian.
   \end{proof}

  \begin{prop}\label{EPE}  For any $\sigma\in \Rep(G,E)$  such that $\sigma^\vee\in M(G,K,E)$,  the homological multiplicity  $$m^i(\sigma)=\dim_E \Ext^i_{\CH(K,E)}( (i_H^GE)^K,(\sigma^\vee)^K)\quad \forall\ i\in\BN.$$ If moreover $\sigma$ has finite length, then $m^i(\sigma)$ is finite for each $i\geq0$ and $0$ for $i>d(G)$. In particular,  $\EP(\sigma)$
	is actually a  finite sum.   
	\end{prop}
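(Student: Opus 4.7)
The plan is first to identify $m^i(\sigma)$ with an Ext group of $\CH(K,E)$-modules, and then to deduce the finiteness and the vanishing above $d(G)$ by base change to the known situation over $\BC$.

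For the identification, I would apply Frobenius reciprocity and the duality identity of \cite[Proposition 2.5]{Pra18}; both are purely homological-algebraic and are thus valid with $E$ in place of $\BC$, giving
$$m^i(\sigma)=\dim_E \Ext^i_{G(F)}(\sigma, I_H^G E)=\dim_E\Ext^i_{G(F)}(i_H^G E,\sigma^\vee).$$
Then I would invoke Bernstein's decomposition theorem over $E$: for $K$ in the fixed neighborhood basis, $M(G,K,E)\subset \Rep(G,E)$ is a direct summand and $(-)^K$ is an equivalence of categories $M(G,K,E)\simeq \Mod_{\CH(K,E)}$. The transfer of these statements from $\BC$ to $E$ goes by the same Noetherian/faithfully-flat-descent style of argument used in the proof of Lemma \ref{Noeth}. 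Under the hypothesis $\sigma^\vee\in M(G,K,E)$, the projection of $i_H^G E$ onto its $M(G,K,E)$-component (whose $K$-fixed part is still $(i_H^G E)^K$) does not affect $\Ext^\bullet_{G(F)}(-,\sigma^\vee)$, so the equivalence of categories yields
$$\Ext^i_{G(F)}(i_H^G E,\sigma^\vee)\cong \Ext^i_{\CH(K,E)}\bigl((i_H^G E)^K,(\sigma^\vee)^K\bigr),$$
which is the asserted identity.

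Assume now that $\sigma$ has finite length, so that $\sigma$ and $\sigma^\vee$ are admissible and $(\sigma^\vee)^K$ is finite-dimensional over $E$. By Lemma \ref{Noeth} the ring $\CH(K,E)$ is Noetherian and $(i_H^G E)^K$ is finitely generated over it, so one may choose a resolution of $(i_H^G E)^K$ by finite free $\CH(K,E)$-modules. Applying $\Hom_{\CH(K,E)}(-,(\sigma^\vee)^K)$ term by term gives a complex of finite-dimensional $E$-spaces, exhibiting each $m^i(\sigma)$ as the dimension of a subquotient of a finite-dimensional $E$-vector space; in particular $m^i(\sigma)<\infty$ for every $i$.

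For the vanishing $m^i(\sigma)=0$ when $i>d(G)$, I would base-change along the flat extension $E\hookrightarrow \BC$ via Lemma \ref{base III}: since $(i_H^G E)^K$ is finitely generated over the Noetherian ring $\CH(K,E)$ it is pseudo-coherent in $D(\CH(K,E))$, so the natural map
$$\Ext^i_{\CH(K,E)}\bigl((i_H^G E)^K,(\sigma^\vee)^K\bigr)\otimes_E\BC\longrightarrow \Ext^i_{\CH(K,\BC)}\bigl((i_H^G \BC)^K,(\sigma_\BC^\vee)^K\bigr)$$
is an isomorphism, where $\sigma_\BC:=\sigma\otimes_E\BC$; here one uses $(i_H^G E)^K\otimes_E\BC\cong (i_H^G \BC)^K$ and, invoking admissibility, $(\sigma^\vee)^K\otimes_E\BC\cong(\sigma_\BC^\vee)^K$. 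The right-hand side vanishes for $i>d(G)$ by \cite[Proposition 2.9]{Pra18} applied to the finite-length $\BC$-representation $\sigma_\BC$, and by faithful flatness so does the left. Finally, $\EP(\sigma)=\sum_{i=0}^{d(G)}(-1)^i m^i(\sigma)$ is a finite sum of finite integers. The main technical point in the whole argument is ensuring that the Bernstein decomposition is available over $E$, but this is standard once one has Lemma \ref{Noeth}.
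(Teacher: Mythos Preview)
Your strategy for the vanishing above $d(G)$ (base change to $\BC$ via Lemma \ref{base III} and invoke \cite[Proposition 2.9]{Pra18}) is exactly the paper's. The difference lies in how you obtain the identification $m^i(\sigma)=\dim_E\Ext^i_{\CH(K,E)}((i_H^GE)^K,(\sigma^\vee)^K)$. You argue directly over $E$, asserting Bernstein decomposition for $\Rep(G,E)$ so that $(-)^K$ becomes an equivalence on $M(G,K,E)$. The paper instead proves the identification by the same base-change-to-$\BC$ trick used for the vanishing: it shows separately that
\[
\Ext^i_{G(F)}(\sigma,I_H^GE)\otimes_E\BC\cong\Ext^i_{G(F)}(\sigma_\BC,I_H^G\BC)
\quad\text{and}\quad
\Ext^i_{\CH(K,E)}((i_H^GE)^K,(\sigma^\vee)^K)\otimes_E\BC\cong\Ext^i_{\CH(K,\BC)}((i_H^G\BC)^K,(\sigma_\BC^\vee)^K),
\]
and then reads off the equality of $E$-dimensions from the known equality over $\BC$. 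The finiteness for finite-length $\sigma$ is likewise deduced from the $\BC$-case in the same stroke, rather than by your (correct, and somewhat more direct) argument via a finite free resolution against a finite-dimensional target.

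The weak point in your version is the sentence ``this is standard once one has Lemma \ref{Noeth}.'' Lemma \ref{Noeth} yields only that $\CH(K,E)$ is Noetherian and $(i_H^GE)^K$ is finitely generated; it does not give you that $M(G,K,E)$ is a direct summand of $\Rep(G,E)$ or that $(-)^K$ is an equivalence on it. To make your route work you would need a genuine faithfully-flat-descent argument, for instance checking that the counit $\CH(G,E)e_K\otimes_{\CH(K,E)}V^K\to V$ is an isomorphism for $V\in M(G,K,E)$ by tensoring with $\BC$, and similarly descending the categorical splitting. That is plausible but is not what you wrote; note that the paper itself only claims Bernstein decomposition over algebraically closed fields (see the proof of Proposition \ref{compact H-dist}). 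The paper's approach is more economical precisely because it never needs this statement over $E$.
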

\begin{proof} For any  $\sigma\in \Rep(G,E)$, set $\sigma_\BC:=\sigma\otimes_{E}\BC$.
  Then for any $\sigma\in\Rep(G,E)$ and  $\theta\in\Rep(G,\BC)$  $$\Hom_{G(F)}(\sigma,\theta)=\Hom_{G(F)}(\sigma_\BC,\theta).$$
 Thus computing using any projective resolution of $\sigma$, one finds 
  $$\Ext^i_{G(F)}(\sigma,I_H^GE)\otimes_E\BC\cong\Ext^i_{G(F)}(\sigma_\BC,I_H^G\BC)\quad \forall\ i\geq0.$$
By Lemma \ref{Noeth},  $(i_H^GE)^K\in D(\CH(K,E))$ is pseudo-coherent for $K$ splitting.  Thus by Lemma \ref{base III},
  $$\Ext^i_{\CH(K,E)}((i_H^GE)^K,(\sigma^\vee)^K)\otimes_{E}\BC\cong \Ext^i_{\CH(K,\BC)}((i_H^G\BC)^K,(\sigma_\BC^\vee)^K)\quad \forall\ i\geq0.$$
 From the corresponding results for $\sigma_\BC$, one deduce that
 \begin{itemize}
     \item if $\sigma^\vee\in\Rep(G,K,E)$, $$m^i(\sigma)=\dim_E \Ext^i_{\CH(K,E)}((i_H^GE)^K,(\sigma^\vee)^K),\ \forall\ i\geq0,$$
     \item if $\sigma$ has finite length,  $m^i(\sigma)$  is finite for all $i\geq0$ and $m^i(\sigma)=0$ if $i>d(G)$.
 \end{itemize}
\end{proof}

\begin{prop}\label{fdE}For any splitting open compact subgroup $K\subset G(F)$, $(i_H^GE)^K\in D(\CH(K,E))$ is perfect.
\end{prop}
\begin{proof} Take  $V\subset i_H^G\BC$ be the sub-representation generated by $(i_H^G\BC)^K$. By \cite[Appendix]{Fli92}, $V$ admits an explicit bounded above and below  resolution by projective objects in $\Rep(G,\BC)$ (actually for certain $K$, the projective resolution can be made explicitly by the theory of Schneider-Stuhler, see \cite[Theorem II.3.1]{SS98} and \cite[Theorem 1.2]{OS13}).
Thus there exists a positive integer $N$ such that for any $W\in \CM(G,K,\BC)$, $$\Ext^i_{G}(V,W)=0,\quad \forall\ i>N.$$  
Note that $\sigma\mapsto \sigma^K$ induces an equivalence between $\CM(G,K,\BC)$ and the category of $\CH(K,\BC)$-modules (see \cite[Theorem 2.5(1)]{AS20}). Since $(i_H^G\BC)^K=V^K$, one finds  $$\Ext^i_{\CH(K,\BC)}((i_H^G\BC)^K,M)=0,\quad \forall\ i>N$$
for  any $\CH(K,\BC)$-module $M$.
Thus by Lemma \ref{base III}, 
for  any $\CH(K,E)$-module $M$, $$\Ext^i_{\CH(K,E)}((i_H^GE)^K,M)=0,\quad \forall i>N.$$
Take any resolution $P^\bullet$ of $(i_H^GE)^K$ $$\cdots\to P_{N+1}\to  P_N\to \cdots P_1\to P_0\to0\cdots$$  by finite projective $\CH(K,E)$-modules. 
Let $Q=\coker(P_{N+2}\to P_{N+1})$.
Then $Q$ admits a resolution $$\cdots\to P_{N+2}\to P_{N+1}\to 0\cdots$$
Then $\Ext^1_{\CH(K,E)}(Q,M) =0$ for $M=\ker(P_{N+1}\to Q)$. Consequently, $P_{N+1}=Q\oplus M$ and $Q$ is projective.
 Consequently, $(i_H^GE)^K$ is perfect as it is quasi-isomorphic to
$$\cdots0\to Q\to  P_N\to \cdots \to P_0\to 0\cdots.$$
\end{proof}

Now we  prove Theorem \ref{main}.  Let
$R$ be a finitely generated reduced $E$-algebra and $\Sigma\subset\Spec(R)$ be a Zariski denset subset of closed points. We restate Theorem \ref{main} for the convenience of  readers.


\begin{thm}Let $\pi$ be a torsion-free smooth admissible finitely generated $R[G(F)]$-module whose fiber rank is locally constant on $\Sigma$. Assume that  there exists a finitely generated smooth admissible torsion-free $R[G(F)]$-module $\tilde{\pi}$ such that $\tilde{\pi}|_x\cong(\pi|_x)^\vee$ for any $x\in\Sigma$. Then $m^i(\pi|_x)$	is upper semi-continuous  for each $i\in\BN$ and $\EP(\pi|_x)$ is locally constant.
\end{thm}
\begin{proof}Since splitting open subgroups form an neighborhood system of $1\in G(F)$,  one can take a splitting  open compact subgroup $K\subset G(F)$ such that $\tilde{\pi}^K$ generates $\tilde{\pi}$ and $(i_H^GE)^K\in D(\CH(K,E))$ is perfect by Proposition  \ref{fdE}. Thus  by  Proposition \ref{base II},
$$\RHom_{\CH(K,E)}((i_H^GE)^K,\tilde{\pi}^K)\otimes_R^Lk(x)\cong \RHom_{\CH(K,E)}((i_H^GE)^K,\tilde{\pi}^K\otimes_R^Lk(x))$$
By the duality between $\pi^K|_x$ and $\tilde{\pi}^K|_x$ and Lemma \ref{proj},  upon shrinking $\Spec(R)$ to an open subset containing $\Sigma$ we can and will assume  the fiber rank of $\tilde{\pi}^U$ is locally constant on $\Sigma$ and thus  the $R$-module $\tilde{\pi}^U$ is finite projective. 
Thus 
$$\RHom_{\CH(K,E)}((i_H^GE)^K,\tilde{\pi}^K\otimes_R^Lk(x))\cong  \RHom_{\CH(K,E)}((i_H^GE)^K,\tilde{\pi}^K|_x).$$
 By Lemma \ref{base I} and Proposition \ref{EPE}, one has $m^i(\pi|_x)=\dim_{k(x)}\Ext^i_{\CH(K,E)}((i_H^G E)^K, \tilde{\pi}^K|_x).$ By Proposition \ref{ST}, to finish the proof it suffices to show the complex 
 $\RHom_{\CH(K,E)}((i_H^GE)^K,\tilde{\pi}^K)$ is perfect in $D(R)$. As  $(i_H^GE)^K$  admits a bounded above and below resolution  $P^\bullet$    by finite projective $\CH(K,E)$-modules, the complex $\RHom_{\CH(K,E)}((i_H^GE)^K, \tilde{\pi}^K)$ is represented by
$\Hom_{\CH(K,E)}(P^\bullet, \tilde{\pi}^K)$. Since $\tilde{\pi}^K$ is finitely generated over $R$ by the admissibility of $\tilde{\pi}$, $\Hom_{\CH(K,E)}(P^\bullet, \tilde{\pi}^K)$ is a complex of finitely generated $R$-modules. 
Thus $H^i(\RHom_{\CH(K,E)}((i_H^GE)^K, \tilde{\pi}^K))$ are finitely generated as $R$-modules for each $i\in\BZ$.
 Now the desired perfectness follows from Lemma \ref{per II}  and Proposition \ref{EPE}.
	\end{proof}
Finally, we remark that when $H(F)$ is compact, the upper semi-continuity of $m^i(\pi|_x)$ holds for all torsion-free finitely generated smooth admissible $R[G(F)]$-modules $\pi$ (here we do not assume the existence of $\tilde{\pi}$).
\begin{prop}\label{compact H-dist}Assume  $H(F)$ is compact.  Then for any torsion-free finitely generated smooth admissible $R[G(F)]$-module $\pi$,   the function $\EP(\pi|_x)= m^0(\pi|_x)$
is upper semi-continuous on $\Spec(R)$.
\end{prop}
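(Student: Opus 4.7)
The plan is to reduce $\EP(\pi|_x)$ to $m^0(\pi|_x)$ by semisimplicity for compact $H(F)$, express $m^0(\pi|_x)$ as the fiber dimension of an $R$-module via Haar averaging, and then argue that this $R$-module is finitely generated so that Lemma \ref{proj} applies.

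First I would show that for any characteristic-zero field $k$, the trivial representation $k$ is injective in the category of smooth $H(F)$-representations over $k$. Indeed, any smooth $V$ is a filtered union $V = \bigcup_L V^L$ over open normal subgroups $L \triangleleft H(F)$ of finite index (such $L$ form a neighborhood basis of $1$ because $H(F)$ is a compact $p$-adic analytic group), and on each $V^L$ the finite quotient $H(F)/L$ acts semisimply by Maschke. Consequently $\Ext^i_{H(F)}(\pi|_x, k(x)) = 0$ for all $i \geq 1$, and hence $\EP(\pi|_x) = m^0(\pi|_x)$.

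Next, the normalized Haar measure on $H(F)$ yields an $R$-linear idempotent $e: \pi \to \pi$, $v \mapsto \int_{H(F)} hv\,dh$ (a finite sum by smoothness of $\pi$), with image $\pi^{H(F)}$ and kernel equal to the kernel of the canonical quotient $\pi \twoheadrightarrow \pi_{H(F)}$. Hence the natural map $\pi^{H(F)} \to \pi_{H(F)}$ is an $R$-linear isomorphism, and since $e$ is $R$-linear it commutes with base change: $\pi^{H(F)} \otimes_R k(x) \cong (\pi|_x)^{H(F)} \cong (\pi|_x)_{H(F)}$. Combined with the tautological identity $\Hom_{H(F)}(\pi|_x, k(x)) \cong \Hom_{k(x)}((\pi|_x)_{H(F)}, k(x))$, this yields
\[ m^0(\pi|_x) = \dim_{k(x)} \pi^{H(F)} \otimes_R k(x). \]

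The crux of the proof is then to show $\pi^{H(F)}$ is a finitely generated $R$-module. By Bruhat-Tits theory, every compact subgroup of $G(F)$ lies in some open compact subgroup, so fix $K_0 \subset G(F)$ open compact containing $H(F)$; open normal subgroups $K \triangleleft K_0$ form a neighborhood basis of $1 \in G(F)$. Using the finite generation of $\pi$, I would choose such a $K$ small enough that $\pi^K$ generates $\pi$ as an $R[G(F)]$-module. Since $H(F) \subset K_0$ normalizes $K$, the finite quotient $H(F)/(H(F) \cap K)$ acts on the $R$-finitely generated module $\pi^K$, and Maschke averaging in characteristic zero exhibits $(\pi^K)^{H(F)}$ as an $R$-direct summand of $\pi^K$, hence itself $R$-finitely generated. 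I would then argue that $\pi^{H(F)} = (\pi^K)^{H(F)}$ by writing any $v \in \pi^{H(F)}$ as $v = e(v) = \sum_j r_j\, e(g_j u_j)$ with $u_j \in \pi^K$, and using the $K_0$-normality of $K$ together with $H(F) \subset K_0$ to absorb each $g_j$-translate via the sphericity-controlled double-coset decomposition of $G(F)$ relative to $K_0$. Once this identification is in place, Lemma \ref{proj} applied to the $R$-finitely generated module $\pi^{H(F)}$ gives the upper semi-continuity of $x \mapsto \dim_{k(x)} \pi^{H(F)} \otimes_R k(x) = m^0(\pi|_x)$ on $\Spec R$. The main technical obstacle is the last absorption step, as it is the only point where one must leverage sphericity and the fine structure of $K_0 \backslash G(F) / H(F)$ to reduce the averaging over $H(F)$ to one over the finite quotient $K_0/K$.
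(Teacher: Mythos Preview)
Your first two steps—vanishing of higher $\Ext$ by semisimplicity of smooth $H(F)$-representations, and the identification $m^0(\pi|_x)=\dim_{k(x)}\pi^{H(F)}\otimes_R k(x)$ via the Haar idempotent—are correct and match the paper's proof (the paper cites \cite[Theorem 2.14]{AAG12} for the vanishing rather than arguing directly, but this is cosmetic).

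The gap is in your argument for finite generation of $\pi^{H(F)}$ over $R$. The claimed equality $\pi^{H(F)}=(\pi^K)^{H(F)}$ for your \emph{fixed} $K$ cannot be obtained by the absorption step you sketch. For $g_j\notin K_0$ the vector $g_ju_j$ is fixed by $g_jKg_j^{-1}$, not by $K$, and averaging over $H(F)$ does nothing to restore $K$-invariance: there is no relation of the form $K\cdot H(F)\subset H(F)\cdot g_jKg_j^{-1}$ to exploit. Sphericity controls $H\backslash G/P$ for parabolics $P$, not the double cosets $K_0\backslash G(F)/H(F)$, and the latter is typically infinite. So the ``sphericity-controlled double-coset decomposition'' you invoke does not exist in the form you need. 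Note also that your argument never uses torsion-freeness of $\pi$ or the working hypothesis on finite multiplicities—both of which are genuine hypotheses here.

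The paper's route is quite different and uses exactly those two inputs. At each generic point $\eta$ of $\Spec R$, the working hypothesis (via Proposition~\ref{EPE}) gives $\dim_{k(\eta)}(\pi|_\eta)^H<\infty$, so the finite-dimensional space $(\pi|_\eta)^H$ sits inside $(\pi|_\eta)^{K_\eta}$ for some open compact $K_\eta$. Setting $K'=\bigcap_\eta K_\eta$ (finitely many generic points) and using the torsion-free diagonal embedding $\pi\hookrightarrow\prod_\eta\pi|_\eta$, one deduces $\pi^H\subset\pi^{K'}$, which is finitely generated over $R$ by admissibility. The point is that the level $K'$ containing $\pi^H$ depends on $\pi$ and is found \emph{a posteriori} from the generic fibers, not chosen in advance from the group structure alone.
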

\begin{proof} By \cite[Theorem 2.14]{AAG12}, $m^i(\pi|_x)=0$ for each $i\geq1$ and  $\EP(\pi|_x)=m^0(\pi|_x)$ for any $x\in \Spec(R)$.   Let $\pi_H$ (resp. $\pi^H$) be the $H(F)$-coinvariant (resp. $H(F)$-invariant) of $\pi$. Since $H(F)$ is compact, the natural map  $\pi^H\to \pi_H$ is an isomorphism and for any $x\in \Spec(R)$, $(\pi^H)|_x\cong (\pi|_x)^H\cong (\pi|_x)_H.$
In particular, $m^0(\pi|_x)=\dim_{k(x)}(\pi|_x)_H=\dim_{k(x)}\pi^H|_x$. By Lemma \ref{proj} to finish the proof,  it suffices to show $\pi^H$ is coherent.
 
 Note that  \cite[Theorem 2.5(1)]{AS20} actually works for any algebraically closed field of characteristic zero. Thus by Proposition \ref{EPE}, for each generic point $\eta$ of $\Spec(R)$ and some splitting open subgroup $K$
 $$\dim_{k(\eta)}\pi^H|_{\eta}=\dim_{k(\eta)}\Hom_{G(F)}((i_H^G k(\eta))^K, (\pi|_{\eta})^{\vee,K})<\infty.$$
Thus there exists an open compact subgroup $K^\prime\subset G(F)$ such that $\pi^H|_{\eta}\subset (\pi|_\eta)^{K^\prime}$ for all $\eta$.  Then for any $v\in \pi^H$ and $k\in K^\prime$, $k\cdot v= v$ in $\prod_{\eta}\pi|_\eta$. Since the diagonal map 
$ \pi\hookrightarrow \prod_{\eta}\pi|_\eta$ is injective, one has $k\cdot v=v$ in $\pi$ and consequently $\pi^H\subset \pi^{K^\prime}$. As $\pi^{K^\prime}$ is coherent by the admissibility of $\pi$,  $\pi^H$ is coherent and we are done.
 \end{proof}

\s{\bf Acknowledgement} The debt this work owes to \cite{Pra18} and \cite{AS20} is clear. We would like to thank Professor Ye Tian for his consistent encouragement. We also want to thank  the referee for the sharp reading, particularly directing us to the  reference \cite{Fli92} on the projective resolutions of smooth representations. 

This research is supported by the National Key R$\&$D Program of China No. 2023YFA1009702. L. Cai is also supported by National Natural Science Foudation of China No. 12371012. Y. Fan is also supported by  National Natural Science Foudation of Beijing, China No. 24A10020.

\end{document}